\documentclass[11pt]{amsart}
\usepackage{mathtools}
\usepackage{amssymb}
\usepackage{cmap}           
\usepackage[margin=1in]{geometry}
\usepackage[utf8]{inputenc}
\usepackage{color}
\usepackage[pdfdisplaydoctitle=true,
            colorlinks=true,
            urlcolor=blue,
            citecolor=blue,
            linkcolor=blue,
            pdfstartview=FitH,
            pdfpagemode= UseNone,
            bookmarksnumbered=true]{hyperref}
\vfuzz2pt 
\hfuzz2pt 
\newtheorem{theorem}{Theorem}

\newtheorem{lemma}{Lemma}

\theoremstyle{definition}
\newtheorem{definition}{Definition}

\newtheorem{example}{Example}
\theoremstyle{remark}
\newtheorem{remark}{Remark}
\newcommand{\CC}{\mathbb{C}}

\newcommand{\RR}{\mathbb{R}}
\newcommand{\ZZ}{\mathbb{Z}}

\newcommand{\norm}[1]{\left\Vert#1\right\Vert}
\newcommand{\abs}[1]{\left\vert#1\right\vert}

\DeclareMathOperator{\sgn}{sign} %

\newcommand{\Diff}{\mathrm{Diff}}
\newcommand{\D}[1]{\mathcal{D}^{#1}(S^{1})}
\newcommand{\CS}{\mathrm{C}^{\infty}(S^{1})}

\newcommand{\HH}[1]{H^{#1}(S^{1})}

\mathtoolsset{showonlyrefs}

\hypersetup{
    pdftitle={Geodesic completeness of the H3/2 metric on Diff(S1)}, 
    pdfauthor={M. Bauer, B. Kolev and S.~C. Preston}, 
    pdfsubject={MSC 2010: 35Q35, 53D25}, 
    pdfkeywords={Euler--Arnold equation, Geodesic flows on the diffeomorphisms group, Sobolev metrics of fractional order, Global existence of solutions}, 
    pdflang=EN 
    }
\begin{document}

\title{Geodesic completeness of the $H^{3/2}$ metric on $\mathrm{Diff}(S^{1})$}%

\author[M. Bauer]{Martin Bauer}
\address[Martin Bauer]{Department of Mathematics, Florida State University, 32301 Tallahassee, USA}
\email{bauer@math.fsu.edu}

\author[B. Kolev]{Boris Kolev}
\address[Boris Kolev]{Université Paris-Saclay, ENS Paris-Saclay, CNRS, LMT - Laboratoire de Mécanique et Technologie, 94235, Cachan, France}
\email{boris.kolev@math.cnrs.fr}

\author[S.~C. Preston]{Stephen C. Preston}
\address[Stephen C. Preston]{Department of Mathematics, Brooklyn College and the Graduate Center, City University New
  York, 11210 New York, USA}
\email{Stephen.Preston@brooklyn.cuny.edu}

\subjclass[2010]{35Q35, 53D25}%
\keywords{Euler--Arnold equation, Geodesic flows on the diffeomorphisms group, Sobolev metrics of fractional order, Global existence of solutions}%

\date{\today}%
\begin{abstract}
  Of concern is the study of the long-time existence  of solutions to the Euler--Arnold equation of the right-invariant $H^{\frac{3}{2}}$-metric on the diffeomorphism group of the circle. In previous work by Escher and Kolev it has been shown that this equation admits long-time solutions if the order $s$ of the metric is greater than $\frac{3}{2}$, but the behaviour for the critical Sobolev index $s=\frac32$ has been left open. In this article we fill this gap by proving the analogous result also for the boundary case. We show that the behaviour is the same for all Sobolev metrics of order $\frac{3}{2}$ regardless of lower-order terms.
\end{abstract}

\maketitle

\section{Introduction}
\label{sec:intro}

In this article, we prove longtime existence for solutions of the geodesic initial value problem of the right invariant $H^{3/2}$-metric on the group of smooth diffeomorphisms on the circle. The interest in (fractional) order metrics on diffeomorphism groups is fuelled by their relations to various prominent PDEs of mathematical physics: In the seminal article~\cite{Arn1966}, Arnold showed in 1965 that Euler's equations for the motion of an incompressible, ideal fluid have a geometric interpretation as the geodesic equations on the group of volume preserving diffeomorphisms. Since then, an analogous result has been found for a whole variety of PDEs, including the inviscid Burgers equation, the Hunter--Saxton equation, the Camassa--Holm equation~\cite{CH1993,Kou1999}, and the modified Constantin--Lax--Majda (mCLM) equation~\cite{CLM1985,EKW2012}. Building on the pioneering work of Ebin and Marsden~\cite{EM1970}, these geometric interpretations have been used to obtain rigourous well-posedness and stability results for the corresponding PDEs~\cite{CK2003,Shk2000,Shk1998,MP2010,BHM2013,Gay2009a,MM2013}.

Motivated by the analysis on the mCLM equation, Escher and Kolev recently studied fractional order Sobolev metrics on the diffeomorphism group of the circle~\cite{EK2014,EK2014a}. In their investigations, they showed that the geodesic equation of the class of Sobolev metrics of order $s$ is locally well-posed if $s\geq\frac12$ and globally well-posed if $s>\frac32$. The question of global existence of solutions of the geodesic equation for the critical index $s=\frac32$ was left unanswered. Towards this direction, the third author and Washabaugh proved in~\cite{PW2016} that the Weil--Petersson metric on the universal Teichmüller space, which is of critical order $\frac32$, possesses smooth global solutions. In this article, we extend their analysis to obtain a global existence result for general Sobolev metrics of order $3/2$ on the diffeomorphism group of the circle and thus give a positive answer for the critical index. The analysis for the highest order terms follows as in~\cite{PW2016}, with the main difficulty of the present article being the definition of a good general class of operators that allow to also bound the energy of the lower order terms. We are able to achieve this result for general Fourier multipliers, that allow for a power series expansion towards negative infinity.

\subsection*{Metric completeness} In \cite{BEK2015}, it was shown that the Sobolev metric of order $s>\frac32$ extends smoothly to a \emph{strong} Riemannian metric on the group of Sobolev diffeomorphisms $\mathcal D^s(S^1)$. This allowed the authors to use results on strong right-invariant metrics to show that the metric is not only geodesically complete, but (as a metric on $\mathcal D^s(S^1)$) also metrically complete; see also~\cite{BV2017}. This result is not true anymore for the critical index $s=\frac32$, as $\mathcal D^s(S^1)$ is only a topological group for $s>\frac32$; the metric extends only to a smooth, \emph{weak} Riemannian metric
on the Sobolev completion $\mathcal D^q(S^1)$, for high enough $q>\frac32$. For the metric corresponding to the Euler-Weil-Petersson equation, which is of critical order $\frac32$, Gay-Balmaz and Ratiu \cite{GAYBALMAZ2015717}
found an interpretation as a strong Riemannian metric on a certain subgroup of all quasisymmetric homeomorphisms of the circle and used this to conclude that the velocity field remains in $H^{3/2}$. Although we believe that a similar approach would be feasible for general metrics of order $\frac32$ both the results and methods are entirely distinct from the approach in this paper. In particular their approach does not seem to yield geodesic completeness in the smooth category or on any group of diffeomorphisms $\mathcal D^q(S^1)$ for $q>\frac{3}{2}$.

\subsection*{Metrics of lower order} For certain examples of metrics of order $s<\frac32$, it has been shown that solutions of the Euler--Arnold equation can blowup in finite time. This includes the $L^{2}$-metric (Burgers equation), the $H^{1/2}$-metric (mCLM equation) \cite{PW2016,BKP2016} and the $H^1$-metric (Camassa--Holm equation)~\cite{CH1993,CE1998b}. We conjecture that blowup of solutions occurs for every metric of order $s<\frac32$. This result would provide a complete characterization for the solution behaviour of the geodesic equation of fractional order metrics on the group of diffeomorphisms. As we are not able to show this result at the present time, we leave this question open for future research.

\subsection*{Effect of lower order terms} The effect of lower-order terms can be significant: for example the Camassa-Holm and Hunter-Saxton equations have the same highest-order term in the metric, but all solutions of the latter end in finite time~\cite{Len2008} while some solutions of the former remain smooth for all time~\cite{CH1993}. On the other hand, we show that if all solutions remain smooth for one metric of order $\frac{3}{2}$, the same is true for all other metrics of the same leading order.

\section{Right invariant Sobolev metrics on $\Diff(S^1)$}
\label{sec:right-invariant-metrics}

Let $\Diff(S^1)$ denote the group of smooth and orientation preserving diffeomorphisms on the circle. The space $\Diff(S^1)$ is an open subset of the Fréchet manifold of all smooth functions $C^{\infty}(S^1,S^1)$ and thus, itself a Fréchet manifold. Furthermore, composition and inversion are smooth maps and $\Diff(S^1)$ is a Fréchet-Lie group, where the Lie algebra is the space of vector fields on the circle, equipped with the negative of the usual Lie-bracket on vector fields, \textit{i.e.}:
\begin{align*}
  [u,w] = u_x w - u w_x\;.
\end{align*}
See \cite{Ham1982} for more details on diffeomorphism groups as infinite dimensional Lie groups. Given an inner product on the space of vector fields we can extend this using right translations to obtain a right invariant metric on the diffeomorphism group.
Thus, to define a \emph{right-invariant} metric, it remains only to specify the inner product on the space of vector fields. The most natural choice is given by the standard $L^{2}$-inner product:
\begin{equation}
  \langle u,v \rangle_{L^{2}} = \int_{S^1} uv\; dx \;.
\end{equation}
This metric (which corresponds to the Burgers equation) has been studied in great detail and it has been shown, in particular, that:
\begin{enumerate}
  \item the induced geodesic distance of the metric is vanishing~\cite{MM2005};
  \item the exponential map is not a $C^1$ diffeomorphism~\cite{CK2002}.
\end{enumerate}

More generally, we consider an inner product on $C^{\infty}(S^1)$ which writes
\begin{equation*}
  \langle u,v \rangle_{A} = \int_{S^1} (Au)v\; dx \;.
\end{equation*}
where $A: C^{\infty}(S^1)\rightarrow C^{\infty}(S^1)$, the so-called \emph{inertia operator} is self-adjoint, with respect to the $L^{2}$-inner product \textit{i.e.}
\begin{equation*}
  \int_{S^1} (Au)v\; dx = \int_{S^1} u(Av)\; dx, \qquad \forall u,v\in C^{\infty}(S^1) \,,
\end{equation*}
and positive definite \textit{i.e.}
\begin{equation*}
  \int_{S^1} (Au)u\; dx>0, \qquad \forall u\in C^{\infty}(S^1) \,.
\end{equation*}
The corresponding right invariant metric on $\Diff(S^1)$ reads as
\begin{align*}
  G^A_{\varphi}(h,k) = \langle h\circ{\varphi^{-1}},k\circ{\varphi^{-1}} \rangle_{A} = \int_{S^1} (A(h\circ{\varphi^{-1}}))k\circ{\varphi^{-1}}\; dx,
\end{align*}
where $h,k\in T_{\varphi}\Diff(S^1)$. If we assume, furthermore, that $A$ is invertible and commutes with differentiation, then, the Euler--Arnold equation of the metric $G^A$ is given by:
\begin{equation}\label{eq:Euler--Arnold}
  m_t + um_x + 2m u_x = 0, \qquad m = Au, \qquad u(0) = u_0\in C^{\infty}(S^1).
\end{equation}
In this equation, $m$ is the so-called momentum associated to the velocity $u$. If the inertia operator $A$ is a ``\emph{nice operator}'' of order $s>\frac32$, the global existence of smooth solutions to this equation has been shown by Escher and Kolev in~\cite{EK2014}.

\section{The right-invariant $H^{3/2}$-metric on $\Diff(S^1)$}
\label{sec:Fourier-multipliers}

We will now formally introduce the class of \emph{nice operators} we are interested in. We will assume that $A$ is a continuous linear operator on $C^{\infty}(S^1)$ that commutes with differentiation. In that case, $A$ is a \emph{Fourier-multiplier}, \textit{i.e.}
\begin{align}
  (Au)(x) = \sum_{k\in\mathbb Z} a(k) u_{k} \operatorname{exp}(ikx),
\end{align}
where $u_{k}$ is the $k$-th Fourier coefficients of the vector field $u$, see~\cite[App. A]{EK2014}. The sequence $a:\mathbb Z \rightarrow \mathbb C$ is called the \emph{symbol} of $A$ and we will use the notation $A=\operatorname{op}(a(k))$ or equivalently $A=a(D)$. For a more detailed introduction to the theory of Fourier multipliers in the context of $\Diff(S^1)$ we refer to the article \cite{EK2014}. The most important example in our context is the inertia operator for the fractional order Sobolev metric $H^{s}$, which reads:
\begin{equation}\label{eq:Sobolev-operator}
  A = \operatorname{op}\left((1+k^{2})^{s/2}\right).
\end{equation}
In this article we will be interested in the specific case $s=3$.

\begin{definition}
  Given $r \in \RR$, a Fourier multiplier $a(D)$ is of class $\mathcal{S}^{r}$ iff $a$ extends to a smooth function $\RR^{d} \to \CC^{d}$ and satisfies moreover the following condition:
  \begin{equation*}
    a^{(l)}(\xi) = O(\abs{\xi}^{r-l}), \qquad \forall l \in \mathbb N.
  \end{equation*}
\end{definition}

\begin{remark}
  Note that a Fourier multiplier $a(D)$ of class $\mathcal{S}^{r}$ extends to a \emph{bounded linear operator}
  \begin{equation*}
    \HH{q} \to \HH{q-r}
  \end{equation*}
  for any $q \ge r$.
\end{remark}

\begin{definition}
  If a Fourier multiplier $a(D)$ extends to a bounded linear operator
  \begin{equation*}
    \HH{q} \to \HH{q-r}
  \end{equation*}
  for $q$ big enough and $r \in \RR$, we will say that $a(D)$ is \emph{of order less than or equal to $r$}.
\end{definition}

We will impose a slightly more restrictive condition on the symbol class, by requiring that $a$ has a series representation of the form
\begin{align}\label{eq:symbol-expansion}
  a(\xi) = \sum_{k=0}^{\infty} a_{3-k} \abs{\xi}^{3-k}, \qquad a_3 \ne 0.
\end{align}
In terms of the operator $A=a(D)$ this translates to
\begin{equation}
  A = a_{3} (HD)^3+R_{2}= a_{3} (HD)^3+a_{2}(HD)^{2} +R_1,
\end{equation}
where
\begin{equation}
  R_{k}:= \sum_{j=-\infty}^k a_j (HD)^j
\end{equation}
and $H := \operatorname{op}(-i\sgn(k))$ denotes the Hilbert transform. Using the property $H^{2}=-\operatorname{Id}$, we can then rewrite $A$ to obtain
\begin{equation}\label{eq:A-expansion}
  A = -a_{3}HD^3 + R_{2} = - a_{3}HD^3 - a_{2}D^{2} + R_1.
\end{equation}
Note, that the remainder terms $R_{k}$ are Fourier multipliers of order at most $k$.

Finally, in order to ensure local well-posedness of the Euler--Arnold equation~\eqref{eq:Euler--Arnold} (see~\cite{EK2014}), we will require, furthermore, an ellipticity condition on $A$.

\begin{definition}
  A Fourier multiplier $A=a(D)$ in the class $\mathcal{S}^{r}$ is called \emph{elliptic} if
  \begin{equation*}
    \left( 1 + \abs{\xi}^{2}\right)^{r/2} \lesssim \abs{a(\xi)}, \qquad \forall \xi \in \RR^{d}.
  \end{equation*}
\end{definition}

In the following definition, we summarize the assumptions on the class of operators we will consider.

\begin{definition}\label{operatordef}
  An operator $A \in L(C^{\infty}(S^1))$ is in the class $\mathcal E^3_{\operatorname{cl}}$ iff the following conditions are satisfied:
  \begin{enumerate}
    \item $A$ is a Fourier multiplier of class $\mathcal S^3$;
    \item $A$ is elliptic;
    \item $a(\xi)$ is real for all $\xi \in \mathbb R$;
    \item $a(\xi)$ is positive for all $\xi \in \mathbb R$;
    \item $a(\xi)$ has a series expansion of the form \eqref{eq:symbol-expansion}.
  \end{enumerate}
\end{definition}

\begin{remark}
  Note that assumption (1) guarantees that the operator is of order three and commutes with differentiation; The ellipticity condition (2) is required to show local well-posedness of the geodesic equation. Condition (3) guarantees that $A$ is $L^{2}$-self-adjoint and (4) that it is a positive definite operator. Assumption (5) is a technical condition, that is essential for our long-time existence proof.
\end{remark}

\begin{example}\label{ex:H32-metric}
  A trivial example for an operator within the class $\mathcal S^3_{\operatorname{cl}}$ is the operator
  $A=1-H\partial_{\theta}^3$. Another example consist of the Sobolev metric of fractional order $\frac32$ as defined
  in \eqref{eq:Sobolev-operator}. The ellipticity of this inertia operator has been shown in \cite{EK2014}. To see that this operator satisfies assumption (5), one only needs to make a series expansion of the Fourier multiplier $(1+k^{2})^{\frac32}$.
\end{example}

\begin{example}\label{ex:WPmetric}
  The Weil-Petersson metric comes from $A = (HD)^3 - (HD) = -H(D^3+D)$. This satisfies all conditions of Definition \ref{operatordef} except ellipticity, since it is degenerate on the three-dimensional subspace spanned by $\{1, \cos{x}, \sin{x}\}$. The metric generated by $A$ is not Riemannian on $\mathcal{D}(S^1)$ because of this degeneracy, but it descends to a Riemannian metric on the universal Teichm\"uller space, which is the closure of    $\mathcal{D}(S^1)/PSL_2(\mathbb{R})$. Geodesics in the quotient which are initially $C^{\infty}$ will remain so for all time by the result of \cite{PW2016}.
\end{example}

\section{Global Well-Posedness of the EPDiff equation.}
\label{sec:proof}

In this section, we will prove the global existence of solutions to the EDDiff equation of metrics of order $3/2$:

\begin{theorem}\label{thm:maintheorem2}
  Let $G$ be the right invariant metric on $\Diff(S^1)$ with inertia operator $A$ in the class
  $\mathcal E^3_{\operatorname{cl}}$ and let $u_0$ be an $H^s$ velocity field on $S^1$, for some $s>\frac32$. Then the solution $u(t)$ of the Euler--Arnold-equation~\eqref{eq:Euler--Arnold} of the metric $G$ with $u(0)=u_0$ remains in $H^s$ for all time. In particular if $u_0$ is $C^{\infty}$ then so is $u(t)$ for all $t>0$. Thus the space
  $\left(\Diff(S^1),G\right)$ is geodesically complete.
\end{theorem}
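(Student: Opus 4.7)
The plan is to combine energy conservation, a Beale--Kato--Majda-type continuation criterion, and a Lagrangian pointwise bound on $\|u_x\|_{L^\infty}$ adapted from~\cite{PW2016}. Local well-posedness in $H^s$, $s>\tfrac32$, is already supplied by~\cite{EK2014}; denote by $[0,T^*)$ the maximal interval of existence of $u$. Differentiating the squared Riemannian norm $E(u):=\tfrac12\langle Au,u\rangle_{L^2}$ along \eqref{eq:Euler--Arnold} and using the $L^{2}$-self-adjointness of $A$ shows that $E(u(t))\equiv E(u_0)$; by the ellipticity assumption~(2) of Definition~\ref{operatordef} this yields an a priori bound $\|u(t)\|_{H^{3/2}}\le C_0$ on the entire life of the solution. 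A standard manipulation of the transport form of~\eqref{eq:Euler--Arnold} as in~\cite{EK2014} shows moreover that $u$ extends past $T^*$ whenever $\int_0^{T^*}\|u_x(t)\|_{L^\infty}\,dt<\infty$, so the task reduces to producing such an integrable bound.

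I would then pass to Lagrangian coordinates. Let $q(t,\cdot)\in\Diff(S^1)$ be the flow of $u$, $q_t=u\circ q$, $q(0,\cdot)=\id$, and set $\eta:=q_x>0$. The Euler--Arnold equation is equivalent to the pointwise conservation law
\begin{equation*}
  m(t,q(t,x))\,\eta(t,x)^2=m_0(x),\qquad m=Au,
\end{equation*}
and $\eta$ evolves via $\partial_t\eta=(u_x\circ q)\,\eta$. The scalar $M(t):=\max_x\eta(t,x)+\max_x\eta(t,x)^{-1}$ therefore satisfies $(\log M)'\le\|u_x\|_{L^\infty}$, and any sublinear bound on $\|u_x\|_{L^\infty}$ in terms of $\log M$ translates, by an Osgood argument, into non-blow-up of $M$ and hence into the integrability of $\|u_x\|_{L^\infty}$ demanded in step one.

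To produce the sublinear bound I would use \eqref{eq:A-expansion} to split
\begin{equation*}
  u\,=\,A^{-1}m\,=\,-a_3^{-1}(HD^3)^{-1}m+A^{-1}R_2\,u.
\end{equation*}
The first summand is the Weil--Petersson-type principal part: since the conservation law $m\circ q=m_0/\eta^2$ controls $m$ pointwise along the flow, the pointwise computation of~\cite{PW2016}, applied to the antiderivative of $Hm$, yields
\begin{equation*}
  \bigl\|\partial_x\bigl(-a_3^{-1}(HD^3)^{-1}m\bigr)\bigr\|_{L^\infty}\,\lesssim\,(1+\|u\|_{H^{3/2}})\sqrt{1+\log M(t)}.
\end{equation*}
The second summand $A^{-1}R_2u$ is the image of $u$ under a Fourier multiplier of order $\le -1$; using the series expansion~\eqref{eq:symbol-expansion} one expands $A^{-1}R_2$ as a classical symbol whose leading piece sits in order $-1$, so $\partial_x(A^{-1}R_2u)\in H^{3/2}\hookrightarrow L^\infty$ with norm dominated by the energy constant $C_0$ alone. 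Inserting both estimates into $(\log M)'\le\|u_x\|_{L^\infty}$ produces a differential inequality $\dot M\le CM\sqrt{1+\log M}$, which does not blow up in finite time.

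The main obstacle, flagged in the introduction and absent from~\cite{PW2016}, is the interaction of the Lagrangian pointwise machinery with the lower-order Fourier multipliers hidden inside $R_2$. Membership in the smooth symbol class $\mathcal{S}^3$ alone does not prevent $A^{-1}$ from generating error terms that spoil the logarithmic estimate, because the Lagrangian bound on the principal part is not an $H^{s}$-estimate and the operator $A^{-1}R_2$ need not interact cleanly with the composition by $q$. This is exactly what forces the convergent-series condition~(5) of Definition~\ref{operatordef}: it lets one expand $A^{-1}R_2$ as a classical pseudodifferential operator with descending homogeneous pieces $(HD)^j$, $j\le -1$, each of which can be handled on the fixed $H^{3/2}$ energy ball by direct interpolation, and whose contributions do not accumulate. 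Once the integrable bound on $\|u_x\|_{L^\infty}$ has been secured in this way, a standard higher-order $H^s$ energy estimate, carried out as in~\cite{EK2014}, closes the continuation criterion and gives global existence in $H^s$, in $C^\infty$ when $u_0\in C^\infty$, and hence geodesic completeness of $(\Diff(S^1),G)$.
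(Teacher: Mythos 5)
Your outer framework matches the paper's: conservation of the energy $\langle Au,u\rangle_{L^2}$ gives an a priori $H^{3/2}$ bound, the continuation criterion of \cite{EK2014a} reduces everything to controlling $\norm{u_x}_{L^\infty}$ on bounded time intervals, and you correctly identify that condition (5) of Definition~\ref{operatordef} is what tames the lower-order part $A^{-1}R_2$. But the core step --- the actual mechanism that controls $\norm{u_x}_{L^\infty}$ --- is not established, and I do not believe it can be in the form you state. Your plan hinges on the inequality
\begin{equation*}
  \bigl\|\partial_x\bigl(-a_3^{-1}(HD^3)^{-1}m\bigr)\bigr\|_{L^\infty}\lesssim(1+\norm{u}_{H^{3/2}})\sqrt{1+\log M(t)},
\end{equation*}
attributed to ``the pointwise computation of \cite{PW2016}.'' No such estimate appears there, and the quantities do not match: the momentum conservation law $m(t,q(t,x))\,\eta^2=m_0(x)$ controls $m$ pointwise only in terms of $\norm{\eta^{-2}}_{L^\infty}$, i.e.\ polynomially in $M$, not logarithmically; feeding $\norm{m}_{L^\infty}\lesssim M^2\norm{m_0}_{L^\infty}$ through $A^{-1}D$ yields at best $\norm{u_x}_{L^\infty}\lesssim M^2$ and hence $\dot M\lesssim M^3$, which blows up in finite time. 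The alternative reading --- a Brezis--Gallouet-type interpolation, since $u_x\in H^{1/2}$ just fails to embed in $L^\infty$ --- produces a logarithm of a \emph{higher Sobolev norm} of $u$, not of $M$, and bounding that higher norm is itself circular (it requires $\int\norm{u_x}_{L^\infty}\,dt$). Note also that for $u_0\in H^s$ with $s$ close to $3/2$, $m_0=Au_0$ is only a distribution, so the pointwise conservation law cannot even be invoked.

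The ingredient you are missing is the sum-of-squares identity of \cite{BKP2016} (Lemma~\ref{lem:mainlemma}(a) in the paper): $F(u):=uu_{xx}+H(uHu_{xx})$ satisfies $F(u)=2\sum_{n\ge1}(2n-1)\bigl|\sum_{k\ge n}u_ke^{ikx}\bigr|^2$, whence $\norm{F(u)}_{L^\infty}\le4\norm{u}_{H^{3/2}}^2$ even though neither summand alone is bounded for $u\in H^{3/2}$. The paper does not try to bound $u_x$ pointwise from $m$; instead it differentiates along the flow, $\partial_t(u_x\circ\varphi)=(u_{tx}+uu_{xx})\circ\varphi$, substitutes the Euler--Arnold equation for $u_{tx}$, and shows after a careful algebraic rearrangement using \eqref{eq:A-expansion} that the dangerous top-order terms combine into exactly $-H(uHu_{xx})-uu_{xx}$ plus seven remainder terms, each bounded by $\norm{u}_{H^{3/2}}^2$ or by $\norm{u}_{H^{3/2}}\norm{u_x}_{L^\infty}$ (via parts (b)--(d) of the Lemma, which handle products like $u\cdot R_1u$ where the second factor is a distribution). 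Gr\"onwall then gives at most exponential growth of $\norm{u_x}_{L^\infty}$, no Osgood or logarithmic argument is needed, and the conservation law $m\circ q\cdot\eta^2=m_0$ plays no role. Without the cancellation in $F(u)$, or a genuine substitute for it, your differential inequality for $M$ cannot be closed.
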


Note, that this result implies in particular the completeness of the $H^{3/2}$-metric, as the inertia operator  $A = \operatorname{op}\left((1+k^{2})^s\right)$ is of class $\mathcal E^3_{\operatorname{cl}}$, c.f. Example~\ref{ex:H32-metric}.

\begin{remark}
  The condition $u_0\in H^s$ for some $s>\frac32$ is necessary as our proof relies on the velocity field being of class $C^1$. The later fact is based on a full investigation of local well-posedness of geodesic flows for $H^s$ metrics (for $s \ge 1/2$) on Banach manifolds $\D{q}$ (when $q > \frac32$) which was achieved in~\cite{EK2014}. When $q \le \frac32$, $\D{q}$ is not even a group since composition is not stable, and then, the methods used in~\cite{EK2014} do not work anymore. For the metric corresponding to the Euler-Weil-Petersson equation, which corresponds to a special case of the critical index $s = \frac32$, Gay-Balmaz and Ratiu~\cite{GAYBALMAZ2015717} have developed a different framework, emphasising weak solutions rather than smooth solutions. They are thus able to consider the case where $u\in H^{3/2}$, but this approach does not seem to give information about whether initial conditions of higher regularity give solutions of higher regularity in the long term.
\end{remark}

Before we are able to give the proof of our main result we will need to collect several technical estimates that will be necessary to achieve our result.

\begin{lemma}\label{lem:mainlemma}
  Let $u,v\colon S^1\to\RR$ be smooth functions on $S^1$. We have the following estimates:
  \begin{enumerate}
    \item[(a)]  Let $F(u) := uu_{xx} + H(uHu_{xx})$. Then
          \begin{equation*}
            \norm{F(u)}_{L^{\infty}} \le 4 \norm{u}_{H^{3/2}}^{2}.
          \end{equation*}
    \item[(b)] Let $G(u) := B(u_{x}Hu_{x})$, where
          $B$ is a Fourier multiplier of order less than or equal to $-2$. Then
          \begin{equation*}
            \norm{G(u)}_{L^{\infty}} \le C \norm{u}^{2}_{{H}^{1/2}},
          \end{equation*}
          for some constant $C>0$.
    \item [(c)]  Let $B$ be a Fourier multiplier of order $s < -1/2$. Then
          \begin{align*}
            \norm{Bu}_{L^{\infty}} \leq C \norm{u}_{L^{2}},
          \end{align*}
          for some constant $C>0$.
    \item [(d)] Let $B_1$ be Fourier multiplier operator of order $k_{1} \le 3/2$ and $B_{2}$ be Fourier multiplier operator of order $k_{2} < -1/2$. Then, we have
          \begin{equation*}
            \norm{B_{2}(uB_1v)}_{L^{\infty}} \le C \norm{u}_{H^{3/2}}\norm{v}_{H^{3/2}},
          \end{equation*}
          for some constant $C>0$.
  \end{enumerate}
\end{lemma}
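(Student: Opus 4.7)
All four bounds combine algebraic identities for the Hilbert transform with Fourier-side estimates; the crux is (a), where the $H^{3/2}$ regularity is exactly critical and naive Cauchy--Schwarz in Fourier produces divergent sums like $\sum 1/j$. Parts (c) and (d) are essentially Sobolev embedding, while (b) is an intermediate case that exploits Hilbert-transform symmetry to kill the would-be singular ``diagonal'' contribution.

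\emph{Part (a).} The key step is the commutator identity
\begin{equation*}
F(u) = uu_{xx} + H(uHu_{xx}) = [H, u](Hu_{xx}),
\end{equation*}
which follows from $H^{2} = -\id$: indeed $H(uHu_{xx}) = uH(Hu_{xx}) + [H,u](Hu_{xx}) = -uu_{xx} + [H, u](Hu_{xx})$, and the first term cancels $uu_{xx}$. Passing to Fourier series,
\begin{equation*}
\widehat{F(u)}(n) = -\sum_{l+m = n} [1 - \sgn(m)\sgn(n)]\, m^{2}\, \hat u(l) \hat u(m),
\end{equation*}
and the bracket vanishes unless $\sgn(m) \ne \sgn(n)$, which in particular forces $|l| > |n|$ in the remaining terms. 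From $\|F(u)\|_{L^{\infty}} \le \sum_{n} | \widehat{F(u)}(n) |$, using $|\hat u(-k)| = |\hat u(k)|$ and the four sign-sectors, the estimate reduces to bounding $|\widehat{F(u)}(0)| \le \sum_{m \ne 0} m^{2} |\hat u(m)|^{2} \le \|u\|_{H^{3/2}}^{2}$ together with the double sum $4 \sum_{l > j > 0} j^{2} |\hat u(l)||\hat u(j)|$. The latter is closed by the weighted AM--GM
\begin{equation*}
j^{2} |\hat u(l)| |\hat u(j)| \le \tfrac{1}{2} \bigl( j^{4} l^{-2} |\hat u(j)|^{2} + l^{2} |\hat u(l)|^{2} \bigr);
\end{equation*}
summing the first part over $l > j$ via $\sum_{l > j} l^{-2} \le 1/j$ gives $\tfrac{1}{2} \sum_{j} j^{3} |\hat u(j)|^{2}$, and the second summed over $j < l$ gives at most $\tfrac{1}{2} \sum_{l} l^{3} |\hat u(l)|^{2}$, so the double sum is $\le \|u\|_{H^{3/2}}^{2}$. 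The critical feature is the constraint $l > j$ that emerges from the sign condition in the commutator.

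\emph{Part (b).} The identity $2H(u_{x}Hu_{x}) = (Hu_{x})^{2} - u_{x}^{2}$, obtained by specialising $f = g = u_{x}$ in $H(fHg + gHf) = (Hf)(Hg) - fg$, combined with $u_{x} \pm iHu_{x} = 2 P_{\pm} u_{x}$, gives $u_{x} H u_{x} = 2 \operatorname{Im}\bigl((P_{+} u_{x})^{2}\bigr)$; in particular its Fourier support lies in $|n| \ge 2$, so the ``diagonal'' $k + l = 0$ contribution to $\widehat{(u_x Hu_x)}(n)$, which would be uncontrollable by $\|u\|_{H^{1/2}}$, is identically zero. Triangle inequality in Fourier plus $|b(n)| \lesssim (1+n^{2})^{-1}$ then gives
\begin{equation*}
\| G(u) \|_{L^{\infty}} \lesssim \sum_{k, l > 0} \frac{k l}{1 + (k+l)^{2}} |\hat u(k)| |\hat u(l)|,
\end{equation*}
and a weighted AM--GM with weight $(l/k)^{a}$, $a \in (-1, 0)$, reduces the right-hand side to $\sum_{k > 0} k |\hat u(k)|^{2} \le \|u\|_{H^{1/2}}^{2}$, using the elementary estimate $\sum_{l > 0} l^{1+a}/(1+(k+l)^{2}) \sim k^{a}$.

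\emph{Parts (c) and (d).} For (c), $|b(k)| \lesssim (1 + |k|)^{s}$ with $s < -1/2$ implies $\sum_{k} |b(k)|^{2} < \infty$, so Cauchy--Schwarz in Fourier gives $\| Bu \|_{L^{\infty}} \le \sum_{k} |b(k)| |\hat u(k)| \le \bigl( \sum_{k} |b(k)|^{2} \bigr)^{1/2} \| u \|_{L^{2}}$. For (d), applying (c) to $B_{2}$ yields $\|B_{2}(u B_{1} v)\|_{L^{\infty}} \lesssim \|u B_{1} v\|_{L^{2}} \le \|u\|_{L^{\infty}} \|B_{1} v\|_{L^{2}}$; the Sobolev embedding $H^{3/2}(S^{1}) \hookrightarrow L^{\infty}(S^{1})$ controls the first factor by $\|u\|_{H^{3/2}}$, and $B_{1}$ of order $\le 3/2$ maps $H^{3/2} \to L^{2}$, giving $\|B_1 v\|_{L^2} \lesssim \|v\|_{H^{3/2}}$. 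The main obstacle throughout is (a): matching the exact critical exponent $H^{3/2}$ forces the weighted AM--GM to be balanced precisely so that the extra decay $l^{-2}$ against the constraint $l > j$ compensates the low-frequency weight $j^{2}$.
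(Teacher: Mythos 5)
Your proposal is correct in all four parts; (c) and (d) coincide with the paper's argument (the paper invokes a product estimate $\norm{uw}_{L^{2}}\lesssim\norm{u}_{H^{3/2}}\norm{w}_{L^{2}}$ from Inci--Kappeler--Topalov where you use H\"older plus the embedding $H^{3/2}\hookrightarrow L^{\infty}$, which is the same content), and (b) follows the same route as the paper --- compute the Fourier coefficients of $u_{x}Hu_{x}$, observe that the low modes vanish, and bound the resulting positive-frequency bilinear sum --- except that you reach the positivity/vanishing structure via $u_{x}Hu_{x}=2\operatorname{Im}\bigl((P_{+}u_{x})^{2}\bigr)$ rather than by direct cancellation in the convolution sum, and you close with a Schur test in place of the Hilbert double series theorem (these are the same inequality in different clothing). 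The genuine divergence is in (a): the paper does not prove this from scratch but cites the identity $F(u)(x)=2\sum_{n\ge1}(2n-1)\bigl|\sum_{k\ge n}u_{k}e^{ikx}\bigr|^{2}$ from their earlier work \cite{BKP2016}, which exhibits $F(u)$ as a manifestly nonnegative quantity and makes the $L^{\infty}$ bound immediate; you instead write $F(u)=[H,u](Hu_{xx})$, observe that the commutator kills the sector $\sgn m=\sgn n$ so that every surviving term has its highest frequency carried by the factor \emph{without} the $m^{2}$ weight, and then close the critical double sum $\sum_{l>j>0}j^{2}|\hat u(l)||\hat u(j)|$ by the weighted AM--GM $j^{2}|\hat u(l)||\hat u(j)|\le\tfrac12(j^{4}l^{-2}|\hat u(j)|^{2}+l^{2}|\hat u(l)|^{2})$ together with $\sum_{l>j}l^{-2}\le 1/j$. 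This is a self-contained and correct alternative (your constant bookkeeping lands at roughly $3\norm{u}_{H^{3/2}}^{2}$, within the stated $4\norm{u}_{H^{3/2}}^{2}$); what it gives up relative to the paper's route is the pointwise positivity $F(u)\ge0$, which is not needed here, and what it buys is independence from the exact summation identity of \cite{BKP2016}.
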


\begin{proof}
  The proof of item (a) can can be found in~\cite{BKP2016}. The main difficulty to show this statement is to prove that
  \begin{equation*}
    F(u)(x) = 2 \sum_{n=1}^{\infty}(2n-1)\bigg\lvert \sum_{k=n}^{\infty}u_{k}e^{ikx}\bigg\rvert^{2}.
  \end{equation*}
  where $u_{k}$ are the Fourier coefficients of $u$, i.e., $u(x) = \sum_{k \in \ZZ} u_{k}e^{ikx}$,
  see~\cite[Theorem 17]{BKP2016}. From this expression for $F$ the estimate follows by direct calculation.

  To show statement (b) we will slightly adapt the proof of the related result in the special case $B = -(\partial_x^2)^{-1}$~\cite[Theorem 8]{PW2016}. In fact the first part of their proof, can be used word by word.
  Therefore we express $u$ in a Fourier basis $u(x) = \sum_{n \in \ZZ} u_{n}e^{inx}$, and let $h = u_{x}Hu_{x}$. Then we have
  \begin{align*}
    (u_{x}Hu_{x})(x) & = i \sum_{m,n\in\ZZ} mn \, u_{m} u_{n} (\sgn{n}) e^{i(m+n)x}                          \\
                     & = i \sum_{k\in \ZZ} \left(\sum_{n\in \ZZ} \abs{n} (k-n)u_{k-n} u_{n}  \right) e^{ikx} \\
                     & = i\sum_{k\in \ZZ} h_{k} e^{ikx},
  \end{align*}
  where $h_{k} = \sum_{n\in\ZZ} \abs{n} (k-n) u_{k-n}u_{n}$. Now let us simplify $h_{k}$. For $k>0$, we have
  \begin{align*}
    h_{k} & = \sum_{n=1}^{\infty} n(k-n) u_{n} u_{k-n} + \sum_{n=1}^{\infty} n(k+n) \overline{u_{n}} u_{k+n}                                                        \\
          & = \sum_{n=1}^{k-1} n(k-n) u_{n} u_{k-n} + \sum_{m=1}^{\infty} (k+m)(-m) u_{k+m} \overline{u_{m}} + \sum_{n=1}^{\infty} n(k+n) \overline{u_{n}} u_{k+n},
  \end{align*}
  where we used the substitution $m=n-k$. Clearly the middle term cancels the last term, so
  \begin{equation*}
    h_{k} = \sum_{n=1}^{k-1} n(k-n) u_{n} u_{k-n}.
  \end{equation*}
  It is easy to see that $h_0=0$ due to cancellations, while if $k<0$, we get
  \begin{equation*}
    h_{k} = -\sum_{n=1}^{\abs{k}-1} n(\abs{k}-n) \overline{u_{n}} \overline{u_{\abs{k}-n}} = - \overline{h_{\abs{k}}}.
  \end{equation*}
  Note in particular that $h_1=h_{-1}=0$. We thus obtain
  \begin{equation*}
    (u_{x}Hu_{x})(x) = \sum_{k=2}^{\infty} \left( ih_{k} e^{ikx} - i\overline{h_{k}} e^{-ikx}\right).
  \end{equation*}
  From here we slightly differ from the proof of \cite{PW2016}, although the idea remains the same. Applying $B$ to the function $h$ yields
  \begin{equation*}
    G(u)(x) = B(h)(x) = \sum_{k=2}^{\infty} \left( ib(k)h_{k} e^{ikx} - ib(-k)\overline{h_{k}} e^{-ikx}\right).
  \end{equation*}
  We estimate only the first part of the sum, the second is similar:
  \begin{align*}
    \norm{G(u)}_{L^{\infty}} & \le \sum_{k=2}^{\infty} \sum_{n=1}^{k-1} b(k) n(k-n) \abs{u_{n}} \abs{u_{k-n}}  \\
                             & = \sum_{n=1}^{\infty} \sum_{k=n+1}^{\infty}b(k)n(k-n) \abs{u_{n}} \abs{u_{k-n}} \\
                             & = \sum_{n=1}^{\infty} \sum_{m=1}^{\infty} b(n+m)nm \abs{u_{n}} \abs{u_{m}}.
  \end{align*}
  Using the assumption on the symbol of $B$, we then have
  \begin{align*}
    \norm{G(u)}_{L^{\infty}} & \le C\sum_{n=1}^{\infty} \sum_{m=1}^{\infty} \frac{nm}{1+(m+n)^{2}} \abs{u_{n}} \abs{u_{m}}     \\
                             & \le 2C \sum_{n=1}^{\infty} \sum_{m=1}^{\infty} \frac{\sqrt{nm} \abs{u_{n}} \abs{u_{m}}}{n+m}    \\
                             & \le 2C\pi \left( \sum_{n=1}^{\infty} n\abs{u_{n}}^{2}\right) \le C\pi \norm{u}^{2}_{{H}^{1/2}},
  \end{align*}
  where the inequality in the last line is precisely the well-known Hilbert double series theorem.

  For statement (c) we use the fact that since $B$ is of order $s < -1/2$, there exists a constant $\tilde{C}$ such that
  \begin{equation*}
    b(m) \le \tilde{C} (1+m^{2})^{s/2}
  \end{equation*}
  Now we express $u$ again in a Fourier basis $u(x) = \sum_{n \in \ZZ} u_{n}e^{inx}$. Then, we have
  \begin{equation*}
    (Bu)(x) = \sum_{m\in \ZZ} b(m) u_{m} e^{imx},
  \end{equation*}
  and thus
  \begin{align*}
    \norm{Bu}_{L^{\infty}} & \le  \sum_{m\in \ZZ} \abs{b(m)u_{m}}                                            \\
                           & \le \tilde{C} \sum_{m\in \ZZ}(1+m^{2})^{s/2}\abs{u_{m}}                         \\
                           & \le \tilde{C} \bigg(\sum_{m\in \ZZ} (1+m^{2})^{s} \bigg)^{1/2} \norm{u}_{L^{2}} \\
                           & \le C \norm{u}_{L^{2}}.
  \end{align*}

  Finally for statement (d), using that $B_{2}$ is of order $k_{2} < -1/2$ we get  by virtue of statement (c),
  \begin{equation*}
    \norm{B_{2}(uB_1v)}_{L^{\infty}} \le C_1 \norm{uB_1v}_{L^{2}}
  \end{equation*}
  Now, we will recall the following inequality (see~\cite[Lemma 2.3]{IKT2013}) on pointwise multiplication in Sobolev spaces, valid for $q >1/2$ and $0 \le \rho \le q$:
  \begin{equation*}
    \norm{uw}_{H^{\rho}} \lesssim \norm{u}_{H^{q}} \norm{w}_{H^{\rho}}.
  \end{equation*}
  We deduce from it, using $q = 3/2$ and $\rho = 0$, that
  \begin{equation*}
    \norm{uB_1v}_{L^{2}} \lesssim \norm{u}_{H^{3/2}} \norm{B_1v}_{L^{2}} \lesssim \norm{u}_{H^{3/2}} \norm{v}_{H^{k_{1}}} \lesssim \norm{u}_{H^{3/2}} \norm{v}_{H^{3/2}},
  \end{equation*}
  which achieves the proof.
\end{proof}

\begin{proof}[Proof of Theorem~\ref{thm:maintheorem2}]
  According to~\cite[Theorem 5.6]{EK2014a}, we only need to show that for any solution $u(t)$ of the Euler--Arnold equation~\eqref{eq:Euler--Arnold}, the norm $\norm{u_{x}(t)}_{L^{\infty}}$ is \emph{bounded on every bounded time interval}.

  Let $\varphi(t)$ be the flow of the time dependent vector field $u(t)$, we have
  \begin{equation*}
    \partial_{t} (u_{x} \circ \varphi) = (u_{tx} + uu_{xx}) \circ \varphi.
  \end{equation*}
  Now, from~\eqref{eq:Euler--Arnold}, we get
  \begin{align*}
    u_{tx} & = -A^{-1}D(uAu_{x} + 2u_{x}Au)                 \\
           & = -A^{-1}\left(D^{2}(uAu) + D(u_{x}Au)\right).
  \end{align*}
  Thus, observing that $\norm{w}_{L^{\infty}} = \norm{w \circ \eta}_{L^{\infty}}$, for every $w \in \CS$ and $\eta \in \Diff(S^{1})$, we have
  \begin{equation*}
    \norm{u_{x}(t)}_{L^{\infty}} \le \norm{u_{x}(0)}_{L^{\infty}} + \int_{0}^{t} \norm{Q(u(s))}_{L^{\infty}}\, ds
  \end{equation*}
  where
  \begin{equation*}
    Q(u) := uu_{xx} - A^{-1}\left(D^{2}(uAu) + D(u_{x}Au)\right).
  \end{equation*}
  Therefore, thanks to the Grönwall inequality, it is sufficient to show that
  \begin{equation*}
    \norm{Q(u)}_{L^{\infty}} \le \alpha \norm{u}^{2}_{H^{3/2}} + \beta \norm{u}_{H^{3/2}}\norm{u_{x}}_{L^{\infty}},
  \end{equation*}
  for some positive constants $\alpha,\beta$, because the norm $\norm{u}_{H^{3/2}}$ is equivalent to the norm $\norm{u}_{A}$, given by
  \begin{equation*}
    \norm{u}_{A}^{2} := \int_{S^{1}} uAu \, dx,
  \end{equation*}
  which is an integral constant. In the following we will achieve the remaining estimate for $Q(u)$.

  To prove the bound for $Q(u)$ we will use the decomposition~\eqref{eq:A-expansion} of $A$ to further expand $Q(u)$ in a sum of terms that can be bounded separately. We have:
  \begin{align*}
    D^{2}(uAu) + D(u_{x}Au) & = - a_{3} D^{2}(uHu_{xxx}) + D^{2}(uR_{2}u) - a_{3}  D(u_xHu_{xxx}) + D(u_xR_{2}u)             \\
                            & = - a_{3} D^3(uHu_{xx}) + a_{3} D^{2}(u_xHu_{xx}) + D^{2}(uR_{2}u)                             \\
                            & \quad - a_{3} D^{2}(u_xHu_{xx}) + a_{3} D(u_{xx}Hu_{xx}) + D(u_xR_{2}u)                        \\
                            & = - a_{3} D^3(uHu_{xx}) + D^{2}(uR_{2}u) + a_{3} D(u_{xx}Hu_{xx}) + D(u_xR_{2}u)               \\
                            & = a_{3} HD^3H(uHu_{xx}) + D^{2}(uR_{2}u) + a_{3} D(u_{xx}Hu_{xx}) + D(u_xR_{2}u)               \\
                            & = - AH(uHu_{xx}) + a_{3}  D(u_{xx}Hu_{xx}) + R_{2}H(uHu_{xx}) + D^{2}(uR_{2}u) + D(u_xR_{2}u).
  \end{align*}
  Further expanding with $R_{2} = -a_{2} D^{2} + R_1$, we get:
  \begin{align*}
    R_{2}H(uHu_{xx}) & = - a_{2} D^{2}H(uHu_{xx}) + R_1H(uHu_{xx})                      \\
                     & = - a_{2} D^{2}H(uHu_{xx}) + R_1HD(uHu_{x}) - R_1H(u_{x}Hu_{x}), \\
    D^{2}(uR_{2}u)   & = - a_{2} D^{2}(uu_{xx}) + D^{2}(uR_1u),                         \\
    D(u_xR_{2}u)     & = - a_{2} D(u_xu_{xx}) + D(u_xR_1u)                              \\
                     & = - \frac{a_{2}}{2} D^{2}(u_x^{2}) + D(u_xR_1u).
  \end{align*}
  Summing up and rearranging all the terms, we get finally:
  \begin{equation*}
    Q(u) = \sum_{i=1}^8 Q_{i}(u),
  \end{equation*}
  where
  \begin{align*}
    Q_{1}(u) & = H(uHu_{xx}) + u u_{xx},                        & Q_{2}(u) & = - a_{3}  A^{-1}D(u_{xx}Hu_{xx}),     \\
    Q_{3}(u) & = a_{2}A^{-1}D^{2}\big(H(uHu_{xx})+uu_{xx}\big), & Q_{4}(u) & = \frac{a_{2}}{2}A^{-1}D^{2}(u_x^{2}), \\
    Q_{5}(u) & = -A^{-1}D^{2}(uR_1u),                           & Q_{6}(u) & = -A^{-1}R_1HD(uHu_{x}),               \\
    Q_{7}(u) & = A^{-1}R_1H(u_xHu_{x}),                         & Q_{8}(u) & = -A^{-1}D(u_xR_1u).
  \end{align*}
  To achieve the proof of Theorem~\ref{thm:maintheorem2}, it only remains to show that all the quadratic terms $Q_{i}(u)$ are bounded either by $\norm{u}_{H^{3/2}}^{2}$ or by $\norm{u}_{H^{3/2}}\norm{u_{x}}_{L^{\infty}}$.

  In particular we will show that for $i = 1, \dotsc , 8$ there exists $\kappa_{i} >0$ such that:
  \begin{equation*}
    \norm{Q_{i}(u)}_{L^{\infty}} \le \kappa_{i} \norm{u}_{H^{3/2}}^{2}, \qquad \text{for} \quad i=1,2,3,5,6,7,
  \end{equation*}
  and
  \begin{equation*}
    \norm{Q_{i}(u)}_{L^{\infty}} \le \kappa_{i} \norm{u_{x}}_{L^{\infty}}\norm{u}_{H^{3/2}}, \qquad \text{for} \quad i=4,8.
  \end{equation*}

  Using statement (a) of Lemma~\ref{lem:mainlemma}, we can bound the supremum norm of $Q_{1}$ via
  \begin{equation*}
    \norm{Q_{1}(u)}_{L^{\infty}} \le \kappa_{1} \norm{u}_{H^{3/2}}^{2}.
  \end{equation*}
  Using statement (b) of Lemma~\ref{lem:mainlemma}, with $B=A^{-1}D$, we obtain:
  \begin{equation*}
    \norm{Q_{2}(u)}_{L^{\infty}} \leq \kappa_{2} \norm{u}^{2}_{{H}^{3/2}} \qquad \text{and} \qquad \norm{Q_{7}(u)}_{L^{\infty}} \lesssim  \norm{u}^{2}_{{H}^{1/2}} \leq \kappa_{7} \norm{u}^{2}_{{H}^{3/2}}.
  \end{equation*}
  Since $A^{-1}D^{2}$ is of order $\le -1$, we can use statement (c) of Lemma~\ref{lem:mainlemma} to bound $Q_{3}$ via:
  \begin{equation*}
    \norm{Q_{3}(u)}_{L^{\infty}} \lesssim \norm{\big(H(uHu_{xx})+uu_{xx}\big)}_{L^{2}} \le \norm{\big(H(uHu_{xx})+uu_{xx}\big)}_{L^{\infty}} \le \kappa_{3} \norm{u}^{2}_{{H}^{3/2}},
  \end{equation*}
  the last inequality following again from statement (a) of Lemma~\ref{lem:mainlemma}. For $Q_{5}$, we apply statement (d) of Lemma~\ref{lem:mainlemma} with $B_1=R_1$ of order $\le 1$ and $B_{2}=A^{-1}D^{2}$ of order $\le -1$, which yields $\norm{Q_{5}(u)}_{L^{\infty}}\leq \kappa_{3} \norm{u}^{2}_{{H}^{3/2}}$. Finally, using again statement (c) of Lemma~\ref{lem:mainlemma}, we get
  \begin{equation*}
    \norm{Q_{4}(u)}_{L^{\infty}} \lesssim \norm{u_{x}^{2}}_{L^{2}} \le \norm{u_{x}}_{L^{\infty}}\norm{u_{x}}_{L^{2}} \lesssim \norm{u_{x}}_{L^{\infty}}\norm{u}_{H^{3/2}},
  \end{equation*}
  then
  \begin{equation*}
    \norm{Q_{6}(u)}_{L^{\infty}} \lesssim \norm{uHu_{x}}_{L^{2}}\lesssim \norm{u}_{L^{\infty}}\norm{Hu_{x}}_{L^{2}}\lesssim \norm{u}_{H^{3/2}}^{2},
  \end{equation*}
  and
  \begin{equation*}
    \norm{Q_{8}(u)}_{L^{\infty}} \lesssim \norm{u_xR_1u}_{L^{2}}\lesssim \norm{u_{x}}_{L^{\infty}}\norm{R_1u}_{L^{2}}\lesssim \norm{u_{x}}_{L^{\infty}}\norm{u}_{H^{3/2}},
  \end{equation*}
  which concludes the proof.
\end{proof}



\begin{thebibliography}{10}

\bibitem{Arn1966}
V.~I. Arnold.
\newblock {S}ur la géométrie différentielle des groupes de {L}ie de
  dimension infinie et ses applications à l'hydrodynamique des fluides
  parfaits.
\newblock {\em Ann. Inst. Fourier (Grenoble)}, 16(fasc. 1):319--361, 1966.

\bibitem{BEK2015}
M.~Bauer, J.~Escher, and B.~Kolev.
\newblock {L}ocal and {G}lobal {W}ell-posedness of the fractional order
  {EPD}iff equation on ${R}^d$.
\newblock {\em Journal of Differential Equations}, 258(6):2010--2053, Mar.
  2015.

\bibitem{BHM2013}
M.~Bauer, P.~Harms, and P.~W. Michor.
\newblock {S}obolev metrics on the manifold of all {R}iemannian metrics.
\newblock {\em J. Differential Geom.}, 94(2):187--208, 2013.

\bibitem{BKP2016}
M.~Bauer, B.~Kolev, and S.~C. Preston.
\newblock Geometric investigations of a vorticity model equation.
\newblock {\em J. Differential Equations}, 260(1):478--516, 2016.

\bibitem{BV2017}
M.~Bruveris and F.-X. Vialard.
\newblock On completeness of groups of diffeomorphisms.
\newblock {\em J. Eur. Math. Soc. (JEMS)}, 19(5):1507--1544, 2017.

\bibitem{CH1993}
R.~Camassa and D.~D. Holm.
\newblock {A}n integrable shallow water equation with peaked solitons.
\newblock {\em Phys. Rev. Lett.}, 71(11):1661--1664, 1993.

\bibitem{CE1998b}
A.~Constantin and J.~Escher.
\newblock {W}ave breaking for nonlinear nonlocal shallow water equations.
\newblock {\em Acta Math.}, 181(2):229--243, 1998.

\bibitem{CK2002}
A.~Constantin and B.~Kolev.
\newblock {O}n the geometric approach to the motion of inertial mechanical
  systems.
\newblock {\em J. Phys. A}, 35(32):R51--R79, 2002.

\bibitem{CK2003}
A.~Constantin and B.~Kolev.
\newblock {G}eodesic flow on the diffeomorphism group of the circle.
\newblock {\em Comment. Math. Helv.}, 78(4):787--804, 2003.

\bibitem{CLM1985}
P.~Constantin, P.~D. Lax, and A.~Majda.
\newblock {A} simple one-dimensional model for the three-dimensional vorticity
  equation.
\newblock {\em Comm. Pure Appl. Math.}, 38(6):715--724, 1985.

\bibitem{EM1970}
D.~G. Ebin and J.~E. Marsden.
\newblock {G}roups of diffeomorphisms and the motion of an incompressible
  fluid.
\newblock {\em Ann. of Math. (2)}, 92:102--163, 1970.

\bibitem{EK2014a}
J.~Escher and B.~Kolev.
\newblock {G}eodesic completeness for {S}obolev {$H^s$}-metrics on the
  diffeomorphism group of the circle.
\newblock {\em J. Evol. Equ.}, 14(4-5):949--968, 2014.

\bibitem{EK2014}
J.~Escher and B.~Kolev.
\newblock {R}ight-invariant {S}obolev metrics of fractional order on the
  diffeomorphism group of the circle.
\newblock {\em J. Geom. Mech.}, 6(3):335--372, 2014.

\bibitem{EKW2012}
J.~Escher, B.~Kolev, and M.~Wunsch.
\newblock {T}he geometry of a vorticity model equation.
\newblock {\em Commun. Pure Appl. Anal.}, 11(4):1407--1419, July 2012.

\bibitem{Gay2009a}
F.~Gay-Balmaz.
\newblock {W}ell-posedness of higher dimensional {C}amassa-{H}olm equations.
\newblock {\em Bull. Transilv. Univ. Bra\c sov Ser. III}, 2(51):55--58, 2009.

\bibitem{GAYBALMAZ2015717}
F.~Gay-Balmaz and T.~S. Ratiu.
\newblock The geometry of the universal teichmüller space and the
  euler–weil–petersson equation.
\newblock {\em Advances in Mathematics}, 279:717 -- 778, 2015.

\bibitem{Ham1982}
R.~S. Hamilton.
\newblock {T}he inverse function theorem of {N}ash and {M}oser.
\newblock {\em Bull. Amer. Math. Soc. (N.S.)}, 7(1):65--222, 1982.

\bibitem{IKT2013}
H.~Inci, T.~Kappeler, and P.~Topalov.
\newblock {\em {O}n the {R}egularity of the {C}omposition of
  {D}iffeomorphisms}, volume 226 of {\em Memoirs of the American Mathematical
  Society}.
\newblock American Mathematical Society, first edition, Mar. 2013.

\bibitem{Kou1999}
S.~Kouranbaeva.
\newblock {T}he {C}amassa-{H}olm equation as a geodesic flow on the
  diffeomorphism group.
\newblock {\em J. Math. Phys.}, 40(2):857--868, 1999.

\bibitem{Len2008}
J.~Lenells.
\newblock {T}he {H}unter-{S}axton equation: a geometric approach.
\newblock {\em SIAM J. Math. Anal.}, 40(1):266--277, 2008.

\bibitem{MM2013}
P.~Michor and D.~Mumford.
\newblock {O}n {Euler}'s equation and '{EPD}iff'.
\newblock {\em The Journal of Geometric Mechanics}, 5(3):319--344, Sept. 2013.

\bibitem{MM2005}
P.~W. Michor and D.~Mumford.
\newblock {V}anishing geodesic distance on spaces of submanifolds and
  diffeomorphisms.
\newblock {\em Doc. Math.}, 10:217--245 (electronic), 2005.

\bibitem{MP2010}
G.~Misio{\l}ek and S.~C. Preston.
\newblock {F}redholm properties of {R}iemannian exponential maps on
  diffeomorphism groups.
\newblock {\em Invent. Math.}, 179(1):191--227, 2010.

\bibitem{PW2016}
S.~C. Preston and P.~Washabaugh.
\newblock Euler--arnold equations and teichmüller theory.
\newblock {\em Differential Geometry and its Applications}, 59:1--11, 2018.

\bibitem{Shk1998}
S.~Shkoller.
\newblock {G}eometry and curvature of diffeomorphism groups with {$H^1$} metric
  and mean hydrodynamics.
\newblock {\em J. Funct. Anal.}, 160(1):337--365, 1998.

\bibitem{Shk2000}
S.~Shkoller.
\newblock {A}nalysis on groups of diffeomorphisms of manifolds with boundary
  and the averaged motion of a fluid.
\newblock {\em J. Differential Geom.}, 55(1):145--191, 2000.

\end{thebibliography}
\end{document}